\documentclass[11pt,leqno]{amsart}

\usepackage[utf8x]{inputenc}
\usepackage[T1]{fontenc}
\usepackage[english]{babel}
\usepackage{enumitem}

\addtolength{\hoffset}{-1cm}
\addtolength{\textwidth}{1.6cm}
\addtolength{\voffset}{-1cm}
\addtolength{\textheight}{1.6cm}

\usepackage[dvipsnames]{xcolor}
\usepackage{amsthm}
\usepackage[pagebackref, colorlinks=true, linkcolor=MidnightBlue, citecolor=OliveGreen]{hyperref}
\usepackage[capitalise,nameinlink]{cleveref}
\usepackage{amsmath,amssymb}
\usepackage{mathdots}

\usepackage[symbol]{footmisc}

\usepackage{tikz}
\usepackage{tikz-cd}

\usepackage{thmtools}
\usepackage{thm-restate}

\newtheorem*{thm}{Main Theorem}
\newtheorem*{que}{Main Question}

\newtheorem{theorem}{Theorem}
\newtheorem{lemma}[theorem]{Lemma}
\newtheorem{proposition}[theorem]{Proposition}
\newtheorem{corollary}[theorem]{Corollary}

\theoremstyle{definition}

\newtheorem{remark}[theorem]{Remark}
\newtheorem{question}[theorem]{Question}

\newcommand{\numth}{\textsuperscript{th}~}

\DeclareMathOperator{\N}{\mathbb{N}}

\DeclareMathOperator{\Norm}{N}
\DeclareMathOperator{\id}{id}

\DeclareMathOperator{\Sym}{Sym}

\DeclareMathOperator{\Max}{Max^{\not\triangleleft}}
\DeclareMathOperator{\ins}{ins}

\DeclareMathOperator{\Aut}{Aut}
\DeclareMathOperator{\Fin}{Fin}
\DeclareMathOperator{\St}{St}
\DeclareMathOperator{\st}{st}
\DeclareMathOperator{\Rist}{Rist}
\DeclareMathOperator{\rist}{rist}


\renewcommand*{\backref}[1]{}
\renewcommand*{\backrefalt}[4]{%
  \ifcase #1 %
    No citations.
  \or
    (Cited on page~#2.)%
  \else
    (Cited on pages~#2.)%
  \fi%
}

\begin{document}

\makeatletter
\@namedef{subjclassname@2020}{\textup{2020} Mathematics Subject Classification}
\makeatother

\title{Branch groups with many maximal subgroups}

\author[J.\,M. Petschick]{J. Moritz Petschick}
\address{
Fakult\"at f\"ur Mathematik,
Universit\"at Bielefeld,
33501 Bielefeld, Germany}
\email{jpetschick@math.uni-bielefeld.de}

\keywords{branch groups, maximal subgroups, finitely generated groups}

\subjclass[2020]{
Primary 20E08
, 20E28
}

\thanks{The author was supported by the Deutsche Forschungsgemeinschaft (DFG, German Research Foundation) – Project-ID~491392403 – TRR~358.}

\date{\today}

\begin{abstract}
	This article presents the construction of finitely generated branch groups with uncountably many maximal subgroups using embedding techniques. This addresses a question posed by Grigorchuk.
\end{abstract}

\maketitle

\vspace{-1em}
\section{Introduction} 
\label{sec:introduction}

Since Pervova discovered in~\cite{Per05} that certain important branch groups do not admit maximal subgroups of infinite index, much work has been done to understand the maximal subgroups of general finitely generated branch groups. On the one hand, Pervova’s result was generalised to broader classes of groups by various authors, see~\cite{Fra20, FT22, KT18}. On the other hand, Bondarenko demonstrated (non-constructively) in~\cite{Bon10} that branch groups may contain maximal subgroups of infinite index. Francoeur and Garrido in~\cite{FG18} provided a concrete instance of a maximal subgroup of infinite index in a branch group, and, furthermore, proved that non-periodic {\v S}uni{\'k} groups admit no more than countably many maximal subgroups. In their study of maximal subgroups of topologically full groups~\cite{GV24}, Grigorchuk and Vorobets mention two desiderata: a finitely generated periodic branch group with a maximal subgroup of infinite index---which was constructed by Garciarena and the author in~\cite{GP24}---and a group exemplifying a positive answer to the following question:

\begin{que}\label{que:main}\cite[Problem~3, attributed to Grigorchuk]{Thi25}
	Does there exist a finitely generated branch group with uncountably many maximal subgroups?
\end{que}

In their recent work~\cite{KS24}, Kionke and Schesler showed that every finitely generated residually finite group $G$ may be embedded into a branch group $\Gamma$ such that certain properties of $G$, such as amenability or periodicity, carry over to $\Gamma$. To address the question posed above, ideas of Kionke and Schesler are combined with methods developed by Garciarena and the author in~\cite{GP24} to establish the following theorem, which yields a group exhibiting the desired properties.

\begin{thm}\label{thm:main}
	Let $G$ be a countably based residually-(finite perfect) group. There exists a layered branch group $\Gamma$ containing a subgroup isomorphic to $G$ and permitting an injective map from the set of non-normal maximal subgroups of $G$ into the set of maximal subgroups of $\Gamma$. If $G$ is finitely generated, so is $\Gamma$.
\end{thm}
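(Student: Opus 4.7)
The plan is to adapt the Kionke--Schesler embedding~\cite{KS24} so that non-normal maximal subgroups of $G$ lift to maximal subgroups of the ambient branch group, and to handle the infinite-index case with the technique introduced by Garciarena and the author in~\cite{GP24}.

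\emph{Setup.} By countable basedness and the residually-(finite perfect) hypothesis, fix a cofinal descending chain of normal subgroups $G = K_0 > K_1 > K_2 > \cdots$ with $\bigcap_i K_i = 1$ and every quotient $Q_i = G/K_i$ finite and perfect. Build a spherically homogeneous rooted tree $T$ whose $i$\numth layer is identified with $K_{i-1}/K_i$, and embed $G$ into $\Aut(T)$ via iterated coset multiplication. Form a layered branch group $\Gamma \leq \Aut(T)$ by adjoining to the image of $G$ a family of rooted or recursively defined branching automorphisms, tuned so that every rigid vertex stabilizer $\Rist_\Gamma(v)$ is non-trivial and every layer group acts transitively. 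When $G$ is finitely generated, one extra recursive generator suffices to keep $\Gamma$ finitely generated.

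\emph{Lifting maximal subgroups.} For each non-normal maximal subgroup $M \leq G$, pick a smallest level $i_M$ at which the image $M K_{i_M}/K_{i_M}$ fails to be normal in $Q_{i_M}$, together with a vertex $v_M$ on that level exhibiting the defect. Set
\[
    M_\Gamma = \langle M, \, \Rist_\Gamma(v_M) \rangle \leq \Gamma.
\]
If $M$ has infinite index and $M K_j = G$ for every $j$, replace the vertex $v_M$ by a carefully chosen ray in $\partial T$, following~\cite{GP24}. Arrange the branching generators so that $M_\Gamma \cap G = M$: maximality of $M$ reduces this to the statement that $M_\Gamma \cap G$ is a proper subgroup of $G$, which is forced because the rigid-stabilizer contributions do not rewrite any element of $G \setminus M$. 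Injectivity of $M \mapsto M_\Gamma$ is then immediate.

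\emph{Main obstacle.} The crux is verifying that $M_\Gamma$ is actually maximal in $\Gamma$, not merely proper. Given $g \in \Gamma \setminus M_\Gamma$, decompose $g$ through its action on $T$; its image in some $Q_j$ must fall outside $M K_j / K_j$, so maximality of $M$ yields $\langle M, g\rangle K_j = G$. Perfectness of each $Q_i$ is then used to recover the missing branching generators as commutators of $G$-conjugates of elements in $\Rist_\Gamma(v_M)$, producing $\langle M_\Gamma, g \rangle = \Gamma$. This closure step is the principal technical hurdle and is exactly where the strengthened hypothesis residually-(finite perfect), rather than mere residual finiteness, is indispensable: without perfectness, the commutator reconstruction of branching automorphisms from $\Rist_\Gamma(v_M)$ would break down at some layer and $M_\Gamma$ would embed in a strictly larger proper subgroup.
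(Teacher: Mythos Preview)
Your outline diverges from the paper at every structural choice, and the divergences are not cosmetic. The paper does \emph{not} embed $G$ via the Kaloujnine--Krasner coset action on a chain $K_0>K_1>\cdots$; instead it realises $G$ as a group of \emph{spinal} automorphisms along a fixed ray $\mathfrak u$, using Gasch\"utz--It\^o to obtain non-regular transitive actions of the perfect quotients $S_n$ on the layers. The lift of a maximal subgroup is then $\Delta(M)=\langle \Fin_{\mathfrak S}(T)\cup M\rangle$, i.e.\ one adjoins \emph{all} finitary automorphisms, not a single rigid stabiliser. Properness of $\Delta(M)$ is proved by a contraction argument comparing nuclear sequences $\overline{(M.\sigma^n)}^{\mathrm{cf}}$ and $\overline{(G.\sigma^n)}^{\mathrm{cf}}$; maximality uses that $\Delta(M)$ is itself layered, together with the self-normalising property $\Norm_G(M)=M$ to promote a bad section $g|_v\in (G\smallsetminus M).\sigma^k$ into an element of $G\smallsetminus M$ inside $\langle\Delta(M),g\rangle$.

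Your scheme has two genuine gaps. First, for every non-normal maximal $M$ of \emph{infinite} index (the only case that can yield uncountably many), one has $MK_j=G$ for all $j$, so the level $i_M$ never exists and your sentence ``its image in some $Q_j$ must fall outside $MK_j/K_j$'' is vacuous; the entire maximality argument collapses precisely in the case of interest, and ``replace $v_M$ by a carefully chosen ray'' is not a construction. Second, even in the finite-index case, $M_\Gamma=\langle M,\Rist_\Gamma(v_M)\rangle$ is far too small to be maximal: pick any vertex $w$ at level $i_M$ outside the $M$-orbit of $v_M$ and any non-trivial $r\in\Rist_\Gamma(w)$; there is no mechanism by which $\langle M_\Gamma,r\rangle$ recovers all of $G$, since you still only have $M\leq G$ together with rigid stabilisers that are disjoint from the coset-embedded copy of $G$. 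The ``commutator reconstruction'' you invoke would let you rebuild branching generators from $G$, not rebuild $G$ from $M$ plus branching generators, which is what maximality actually demands. The spinal embedding in the paper is what makes this step go through: because every $g\in G$ is concentrated on the ray, the difference $\ins_{u_k}(g|_{u_k}^{-1})g$ is finitary, so once $\langle\Delta(M),g\rangle$ contains one element of $G\smallsetminus M$ it contains all of $G$.
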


For the definition of `layered' and `countably based' see \cref{sub:residually_mathcal_c_groups}; every countable residually-(finite perfect) group is `countably based'. That the result is proven by the explicit construction of maximal subgroups given the maximal subgroups of~$G$.

As a coda, this paper contains some topical problems.


\section{Generalities and branch groups} 
\label{sec:generalities_and_branch_groups}

\subsection*{General notation} 
\label{sub:general_notation}

The natural numbers $\N$ include $0$, while $\N_+ = \N\smallsetminus\{0\}$. The left-shift operator (on any kind of sequence) is denoted $\sigma$. For a group~$G$, the set of non-normal maximal subgroups (that is, maximal subgroups that are not normal, \emph{not} subgroups that are maximal among non-normal subgroups) is denoted~$\Max(G)$. By convention, conjugation and commutator are defined $g^h = h^{-1}gh$ and $[g, h] = g^{-1}g^h$, respectively.


\subsection*{Rooted trees} 
\label{sub:rooted_trees}

Let $\mathfrak{X} = (X_n)_{n\in\N_+}$ be a sequence of finite sets of cardinality at least~$2$. The \emph{rooted tree defined by $\mathfrak{X}$} is the graph $T = T_\mathfrak{X}$ consisting of all strings $x_1\dots x_n$ with $x_i \in X_i$ and $n \in \N$, with edges between strings of the form $x_1 \dots x_n$ and $x_1\dots x_n x_{n+1}$. The empty string $\epsilon$ is called the \emph{root} of the tree. For ease of reading, we write $T_\mathfrak{X}.\sigma$ for $T_{\mathfrak{X}.\sigma}$. For a vertex $v \in T$, write $|v|$ for the distance of $v$ to the root, i.e.\ the length of the string $v$. The \emph{$n$\numth layer} $\mathcal{L}_T(n)$ is the set of all vertices of length $n \in \N$.

A \emph{ray} in $T$ is a sequence $\mathfrak{u} = (u_n)_{n \in \N}$ of vertices such that $u_n \in \mathcal{L}_T(n)$ and $u_n$ and $u_{n+1}$ are adjacent for all $n \in \N$. A \emph{spinal sequence for a ray~$\mathfrak{u}$} is a sequence~$\mathfrak{x} = (x_n)_{n \in \N_+}$ such that $x_{n} \in X_{n}$ and $u_n x_{n+1} \neq u_{n+1}$ for all $n \in \N$.


\subsection*{Automorphisms} 
\label{sub:automorphisms}

The group $\Aut(T)$ of (graph) automorphisms of the rooted tree $T = T_\mathfrak{X}$ decomposes as the wreath product $\Aut(T) = \Aut(T.\sigma)\wr \Sym(X_1)$, allowing for iteration. Given $g \in \Aut(T)$ and $x \in X_1$, we write $g|_x$ for the image of the base part of $g$ in the above wreath product under the projection to the component corresponding to $x$. For any $v = x_1 \dots x_n \in T$, set $g|_{v} = g|_{x_1}|_{x_2}\dots|_{x_n}$. The element~$g|_v \in \Aut(T.\sigma^n)$ is called the \emph{section at $v$}. For a set $D \subseteq \Aut(T)$, put $D|_{(n)} = \{ d|_v \mid d \in D, v \in \mathcal{L}_T(n) \}$. For $g, h \in \Aut(T)$ and $v \in T$ we have
\[
	(gh)|_v = g|_{v}h|_{v.g}.
\]


\subsection*{Contraction} 
\label{sub:contraction}

Two sequences $(a_n)_{n \in \N}$ and $(b_n)_{n \in \N}$ are called \emph{cofinal} if there exists~$n \in \N$ such that $a_N = b_N$ for all $N \geq n$. The equivalence class of $(a_n)_{n \in \N}$ under the corresponding equivalence relation is denoted $\overline{(a_n)_{n \in \N}}^{\mathrm{cf}}$.

Let $\mathfrak{D} = (D_n)_{n \in \N}$ be a sequence of subsets $D_n \subseteq \Aut(T.\sigma^n)$. A group $G \leq \Aut(T)$ is called \emph{contracting with respect to $\overline{\mathfrak{D}}^\mathrm{cf}$} if for every $g \in G$ there exists $n \in \N$ such that $g|_{v} \in D_{|v|}$ for all $v$ with $|v| \geq n$. If $G$ is both contracting with respect to $\overline{\mathfrak{D}}^{\mathrm{cf}}$ and $\overline{\mathfrak{E}}^{\mathrm{cf}}$, it is contracting with respect to $\overline{\mathfrak{D} \cap \mathfrak{E}}^{\mathrm{cf}}$, where the intersection is taken pointwise. Thus there exists a unique inclusion-minimal cofinality class~$\overline{\mathfrak{N}}^{\mathrm{cf}}$, called the \emph{nuclear sequence of $G$}, such that $G$ is contracting with respect to it.\footnote{In the literature, contraction is mostly studied in the case of \emph{self-similar groups} acting on \emph{regular} rooted trees, i.e.\ in the case $T = T.\sigma$ and of a group $G$ satisfying $g|_u \in G$ for all $u \in T$ and $g \in G$. In that setting, the group $G$ is called \emph{contracting} (on its own) if its nuclear sequence may be represented by a constant sequence $(D)_{n \in \N}$, for a finite set $D$.}


\subsection*{Branch groups} 
\label{sub:branch_groups}

A group $G \leq \Aut(T)$ is said to act \emph{spherically transitive} if it acts transitively on every layer of $T$. For every $v \in T$, denote by $\st_G(v)$ the pointwise stabiliser of $v$ and by $\St_G(n)$ the pointwise stabiliser of $\mathcal{L}_T(n)$. The \emph{rigid vertex stabiliser of $v$} is the subgroup $\rist_G(v) \leq \St_G(|v|)$ consisting of all elements $g$ such that $g|_u = \id$ for all $u \in \mathcal{L}_T(|v|)\smallsetminus\{v\}$. The \emph{$n$\textsuperscript{th} rigid layer stabiliser} is the subgroup $\Rist_G(n) = \langle \rist_G(v) \mid v \in \mathcal{L}_T(n)\rangle$. A spherically transitive group $G$ is called a \emph{branch group} if~$\Rist_G(n)$ is a subgroup of finite index for all $n \in \N$.

For $g \in \Aut(T)|_{v}$, write $\ins_v(g)$ for the \emph{insertion of $g$ at $v$}, i.e.~the unique element of~$\rist_{\Aut(T)}(v)$ such that $\ins_v(g)|_v = g$. A group $G$ is called \emph{layered} if
\(
	\rist_{G}(v) = G|_{v}
\)
for all $v \in T$, i.e.~if $\ins_v(g) \in G$ for all $g \in G|_{v}$. Any spherically transitive layered group is a branch group.


\subsection*{Generalised spinal groups} 
\label{sub:generalised_spinal_groups}

A \emph{finitary} element $g$ is one such that there exists some~$n \in \N$ such that $g|_u = \id$ for all $u \in \mathcal{L}(n)$. The minimal such $n$ is called the \emph{depth} of~$g$. If the depth of $g$ is $1$, it is called \emph{rooted}. The finitary elements with labels in a sequence $\mathfrak{S}$ form a subgroup $\Fin_{\mathfrak{S}}(T)$ of $\Aut_{\mathfrak{S}}(T)$. Let $\mathfrak{u} = (u_n)_{n \in \N}$ be a ray in $T$. An element $g$ is called $\mathfrak{u}$-\emph{generalised spinal} if $g|_v$ is finitary for all $v \in T\smallsetminus\{u_n \mid n \in \N\}$. It is called $\mathfrak{u}$-\emph{spinal} if all finitary sections are rooted.
We will repeatedly use the fact that, given a $\mathfrak{u}$-generalised spinal element $g$ and an integer $k \in \N$, the difference $\ins_{u_k}(g|_{u_k}^{-1})g$ is finitary.
A group~$G$ generated by subsets~$A$ and $B$ of finitary elements and $\mathfrak{u}$-generalised spinal elements, respectively, is called a \emph{generalised spinal group}. If $A$ and $B$ consist of rooted and $\mathfrak{u}$-spinal elements only, respectively, the group is called a \emph{spinal group}.
Spinal groups were introduced by Bartholdi, Grigorchuk and {\v S}uni{\'k} in~\cite{BGS03}, and encompass important examples like Grigorchuk's groups and the Gupta--Sidki $p$-groups.

\begin{lemma}\label{lem:contraction}
	Let $G$ be a generalised spinal group generated by the sets $A$ and $B$ of finitary and $\mathfrak{u}$-generalised spinal elements, respectively. The nuclear sequence of $G$ is given by $\overline{(B|_{(n)})_{n \in \N}}^\mathrm{cf}$.
\end{lemma}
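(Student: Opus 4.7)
The plan is to prove the equality of cofinality classes by verifying the two defining properties: that $\overline{(B|_{(n)})_{n \in \N}}^{\mathrm{cf}}$ is itself a contracting cofinality class for $G$, and that it lies below every other such class.

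The lower-bound direction is essentially formal. If $\mathfrak{D} = (D_n)_{n \in \N}$ is any sequence with respect to which $G$ is contracting, then each generator $b \in B \subseteq G$ satisfies $b|_v \in D_{|v|}$ for all $|v| \geq n_b$, for some threshold $n_b$ depending on $b$. Passing to the cofinal tail, this gives $B|_{(n)} \subseteq D_n$ for $n$ sufficiently large, whence $\overline{(B|_{(n)})_{n \in \N}}^{\mathrm{cf}} \leq \overline{\mathfrak{D}}^{\mathrm{cf}}$.

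For the contracting property, fix $g \in G$ written as a word $g = s_1 \cdots s_m$ with each $s_i \in (A \cup B)^{\pm 1}$. I would use the recalled fact to decompose each spinal letter as $s_i = \ins_{u_k}(s_i|_{u_k}) \cdot f_{i,k}$ with $f_{i,k}$ finitary. Choosing $k$ larger than the depth of every finitary element in play (those coming from $A^{\pm 1}$ together with all the $f_{i,k}$), the insertions $\ins_{u_k}(\cdot)$ commute with the finitary factors, since their supports lie in the disjoint regions $T_{u_k}$ and $T \smallsetminus T_{u_k}$ respectively. Collecting terms produces $g = \ins_{u_k}(h) \cdot F$, where $h \in \Aut(T.\sigma^k)$ is the ordered product of the ray-restrictions $s_i|_{u_k}$ and $F$ is finitary of controlled depth. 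For $v \in \mathcal{L}_T(n)$ with $n$ larger than both $k$ and the depth of $F$, the section $F|_v$ is trivial; if $v$ is not a descendant of $u_k$ we obtain $g|_v = \id$, which lies in $B|_{(n)}$ because the off-ray finitary sections of any $b \in B$ reduce to the identity at sufficient depth. If instead $v = u_k w$ descends from $u_k$, then $g|_v = h|_w$, and $h$ lies in the generalised spinal group generated inside $\Aut(T.\sigma^k)$ by $\{s_i|_{u_k} : s_i \in B\}$ relative to the shifted ray $\mathfrak{u}.\sigma^k$, whose $B$-sections form the shifted sequence $(B|_{(k+\ell)})_\ell$; applying the same reasoning recursively yields $h|_w \in B|_{(n)}$, closing the argument.

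The main obstacle is to upgrade this recursive reduction into a genuine well-founded induction, since naive word length does not decrease when passing from $g$ to $h$. A likely device is a double induction on a refined complexity pair (for instance word length together with the depth budget of the finitary corrections accumulated so far), coupled with the observation that after sufficiently many iterations every residual finitary contribution has been absorbed into the insertion. A secondary technical point is the verification that $\id$ really belongs to $B|_{(n)}$ for all large $n$: this follows because every $b \in B$ admits off-ray finitary sections of finite depth, whose own sections at deeper levels are trivial.
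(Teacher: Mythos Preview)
Your minimality paragraph is fine and matches the paper's one-line treatment. The real content is the contracting direction, and there your argument has two genuine gaps.

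First, the commutation claim is false. A finitary element $f$ of depth $d < k$ is \emph{not} supported on $T \smallsetminus T_{u_k}$: it permutes the level-$d$ vertices and thereby moves entire subtrees, including $T_{u_k}$ whenever $f$ does not fix $u_d$. Concretely one has $\ins_{u_k}(x)^{\,f} = \ins_{u_k.f}(x)$, so conjugation shifts the insertion point rather than commuting past it. Pushing all the insertions to the left therefore does not yield a single term $\ins_{u_k}(h)\cdot F$; it yields $\big(\prod_j \ins_{v_j}(x_j)\big)\cdot F$ with the $v_j$ scattered over $\mathcal L_T(k)$. At any vertex $v$ where several $v_j$ coincide you are left with a product of just as many spinal sections as before, so nothing has been gained.

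Second---and this is the point you yourself flag---there is no decreasing invariant. Passing from $g$ to $h$ preserves the number of spinal letters, and the finitary corrections $f_{i,k}$ produced in the next round need not be shallower than those in the current one, so the proposed ``depth budget'' does not shrink either. The appeal to ``applying the same reasoning recursively'' is circular without a well-founded order.

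The paper supplies exactly the missing device. One writes $g$ in the conjugate normal form
\[
 g \;=\; b_1^{a_1}\,b_2^{a_2}\cdots b_m^{a_m}\,a_{m+1},\qquad a_i\in\langle A\rangle,\ b_i\in B,
\]
available because $G$ is a quotient of $\langle A\rangle * \langle B\rangle$, and takes $m$ minimal. At level $n$ equal to the maximal depth of the $a_i$ one has $b_i^{a_i}|_v = b_i|_{v.a_i^{-1}} \in B|_{(n)}$, and each such factor is non-finitary at exactly one vertex. After regrouping, $g|_v$ again has the conjugate normal form but with at most $(m+1)/2$ spinal blocks, because two adjacent non-finitary factors merge into a single $\mathfrak u.\sigma^n$-spinal element. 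This halving of $m$ is what makes the iteration terminate; after $O(\log m)$ steps one reaches $m'=1$, and a final section kills the remaining conjugating finitary elements. Your decomposition $s_i = \ins_{u_k}(s_i|_{u_k})\,f_{i,k}$ is not wrong in itself, but it throws away the conjugate structure that makes the length-reduction visible.
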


\begin{proof}
	The proof uses standard methods, compare e.g.~\cite[Lemma~2.4]{BGS03} or~\cite[Lemma~2.5]{Pet23}. Let $b \in B$ and let $a \in A$ with depth $n$. Then for $v \in \mathcal{L}_T(n)$
	\[
		b^a|_v = a^{-1}|_{v} b|_{v.a^{-1}} a|_{v.b^{-1}a} = b|_{v.a^{-1}} \in B|_{(n)}.
	\]
	Let $g \in G$. Since $G$ is a quotient of the free product $A \ast B$, there exist $a_1, \dots, a_m, a_{m+1} \in A$ and $b_1, \dots, b_m \in B$ such that
	\(
		g = b_1^{a_1} \dots b_m^{a_m} a_{m+1}.
	\)
	Among all such products, choose the one with the minimal number~$m$ and set $\ell_G(g) = m$. Write $n$ for the maximal depth of the finitary elements $a_i$. Let $v \in \mathcal{L}(n)$. Then
	\(
		g|_v = b_1^{a_1}|_v \dots b_m^{a_m}|_v.
	\)
	By the analysis of their sections above, the elements of the form $b_i^{a_i}|_v$ are either finitary or in $B|_{u_n}$. If two adjacent elements are contained in $B|_{u_n}$, also their product is, whence $g|_v$ permits a product of the form given above of length at most $(\ell_G(g)+1)/2$. Repeating this process, it is apparent that there exists some $k \in \N$ such that for all $v \in \mathcal{L}_T(k)$ there exist $\tilde{a}_1, \tilde{a}_2 \in A|_{(k)}$ and $\tilde{b}_1 \in B|_{u_k}$ such that $g|_v = \tilde{b}_1^{\tilde{a}_1} \tilde{a}_2$. Examining the sections at the maximum of the depths of $\tilde{a}_1$ and $\tilde{a}_2$ shows that $G$ is contracting with respect to $\overline{(B|_{(n)})_{n \in \N}}^\mathrm{cf}$. Evaluating the sections of~$B$ shows that $\overline{(B|_{(n)})_{n \in \N}}^\mathrm{cf}$ is minimal.
\end{proof}

Let $\mathfrak{u}$ be a ray in $T$ and let $\mathfrak{x}$ be a spinal sequence for $\mathfrak{u}$. Denote by $\operatorname{Sp}(\mathfrak{u}, \mathfrak{x})$ the group of all $\mathfrak{u}$-spinal elements such that $g|_{u_ny} = \id$ for all $n \in \N$ and $y \in X_{n+1}$ such that $y \neq x_{n+1}$ and $u_ny \neq u_{n+1}$, compare to the `G groups' of \cite{BGS03}. The group $\operatorname{Sp}(\mathfrak{u}, \mathfrak{x})$ is isomorphic to the direct product $\prod_{n \in \N_+} \Sym(X_n)$.


\subsection*{Residually-$\mathcal{C}$ groups} 
\label{sub:residually_mathcal_c_groups}

Let $\mathcal{C}$ be a class of groups. A \emph{$\mathcal{C}$-residual representation} of a group $G$ is a homomorphism $\varphi \colon G \to \prod_{i \in I} S_i$ such that $S_i \in \mathcal{C}$ for all $i \in I$ and~$G.\varphi$ is a subdirect product. A $\mathcal{C}$-residual representation is called \emph{countably based} if the index set $I$ is countable; in this case, we assume $I = \N_+$ without loss of generality. It is furthermore called \emph{infinitary} if $G.\varphi \cap \bigoplus_{n \in \N_+} S_i = 1$, or, equivalently, if all powers of the homomorphism induced by the left-shift $\sigma$ on the sequence $(S_n)_{n \in \N_+}$ have trivial kernel. A group is called \emph{(countably based) residually-$\mathcal{C}$} if it admits a faithful (countably based) $\mathcal{C}$-residual representation. Every countable residually-$\mathcal{C}$ group is countably based residually-$\mathcal{C}$.

\begin{lemma}
	Every countably based residually-$\mathcal{C}$ group admits a faithful infinitary $\mathcal{C}$-residual representation.
\end{lemma}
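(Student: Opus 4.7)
The plan is to start from any faithful countably based $\mathcal{C}$-residual representation $\varphi \colon G \to \prod_{n \in \N_+} S_n$ and reindex it so that every original factor appears along an infinite set of new indices. Concretely, I would fix a surjection $k \colon \N_+ \to \N_+$ whose fibres $k^{-1}(m)$ are all infinite (produced, for instance, by composing any bijection $\N_+ \to \N_+ \times \N_+$ with the first projection) and define $\psi \colon G \to \prod_{n \in \N_+} S_{k(n)}$ by $(g.\psi)_n = (g.\varphi)_{k(n)}$. Each factor $S_{k(n)}$ still lies in $\mathcal{C}$ because it is one of the original $S_m$, so no closure property of $\mathcal{C}$ is needed.

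Next I would verify faithfulness and subdirectness of $\psi$. If $g.\psi = 1$, then $(g.\varphi)_{k(n)} = 1$ for every $n$, and surjectivity of $k$ forces $g.\varphi = 1$, hence $g = 1$. For subdirectness, the projection of $G.\psi$ onto its $n$-th factor coincides with the projection of $G.\varphi$ onto its $k(n)$-th factor, which is onto by hypothesis on $\varphi$.

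The infinitary property is where the design of $k$ pays off. Suppose $g \in G$ satisfies $g.\psi \in \bigoplus_{n} S_{k(n)}$, so there is some $N$ with $(g.\psi)_n = 1$ for all $n \geq N$. For any fixed $m \in \N_+$, infiniteness of $k^{-1}(m)$ produces some $n \geq N$ with $k(n) = m$, giving $(g.\varphi)_m = (g.\psi)_n = 1$; as $m$ was arbitrary, $g.\varphi = 1$, and faithfulness of $\varphi$ yields $g = 1$.

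I do not anticipate a substantive obstacle: the only real design choice is the indexing function $k$, and the argument reduces to the book-keeping observation that reusing each original coordinate infinitely often converts "eventually trivial in $\psi$" into "trivial in every coordinate of $\varphi$". If anything, the delicate point is simply to note that one does \emph{not} need $\mathcal{C}$ to be closed under any construction — one merely recycles the factors already provided by $\varphi$.
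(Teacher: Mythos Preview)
Your proposal is correct and follows essentially the same approach as the paper: both repeat each original factor infinitely often so that ``eventually trivial'' forces ``trivial everywhere''. The paper phrases this via the diagonal embedding $\delta \colon \prod_{n} S_n \to \prod_{(n,m) \in \N_+^2} S_n$, which is exactly your construction with $k$ taken to be a bijection $\N_+ \to \N_+^2$ followed by the first projection---the very example you give.
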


\begin{proof}
	Without loss of generality, let $G \leq \prod_{n \in \N_+} S_n$ for some non-trivial groups $S_n \in \mathcal{C}$. Denote by $\delta_n \colon S_n \to \prod_{m \in \N_+} S_n$ the diagonal embedding $g \mapsto (g, g, \dots)$ and consider the induced injection $\delta \colon \prod_{n \in \N_+} S_n \to \prod_{(n,m) \in \N_+^2} S_n$ on the product. The image of $G$ under $\delta$ is a subdirect product of $\prod_{(n,m) \in \N_+^2} S_n$ such that removing any finite number of components does not change the group up to isomorphism.
\end{proof}

In the following, the class of finite perfect groups will be used for $\mathcal{C}$. The following classical result is of need.

\begin{theorem}[Gaschütz--Itô]\label{thm:gaschuetz-ito}
	For every finite insoluble group $P$ there exists a finite set $\tau(P)$ such that $P$ admits a faithful non-regular transitive action on $\tau(P)$.
\end{theorem}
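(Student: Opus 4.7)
The plan is to reduce the theorem to the existence of a non-trivial core-free subgroup. Every finite transitive $P$-set is a coset space $P/H$; the action is faithful iff the normal core $H_P := \bigcap_{x\in P} H^x$ is trivial, and non-regular iff $H \neq 1$. It thus suffices to produce such an $H$ and set $\tau(P) := P/H$.

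The most natural candidate is a cyclic subgroup $\langle g\rangle$ of prime order, since such a subgroup has only two subgroups and is therefore core-free iff it is not normal in $P$. The theorem then reduces to the claim that every finite insoluble group contains a non-normal subgroup of prime order. Suppose, for contradiction, that every such subgroup were normal. For $g$ of prime order $p$, the conjugation action of $P$ on $\langle g\rangle$ factors through $\Aut(\langle g\rangle)\cong \mathbb{Z}/(p-1)\mathbb{Z}$, which is abelian, so $P'\leq C_P(g)$. Iterating, the ultimate derived subgroup $L := P^{(\infty)}$ centralises every element of prime order in $P$. Since $P$ is insoluble, $L$ is non-trivial and perfect, and every prime-order element of $L$ lies in $Z(L)$.

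To derive a contradiction I would first eliminate the layer $E(L)$ of the generalised Fitting subgroup $F^*(L) = F(L)E(L)$, which is self-centralising in $L$. A component $K$ of $L$ would be quasisimple with $K/Z(K)$ non-abelian simple, hence contain a prime-order element outside $Z(K)$; but this element must lie in $Z(L)\cap K \leq Z(K)$, a contradiction. Hence $E(L) = 1$ and $F^*(L) = F(L)$ is nilpotent with $C_L(F(L)) \leq F(L)$.

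The main obstacle is the concluding step: showing that $L$ itself is nilpotent, which—together with being perfect and non-trivial—is absurd. I would argue this via Frobenius's normal $p$-complement criterion. For each $p$-subgroup $R \leq L$, every element of $N_L(R)$ induces an automorphism of $R$ fixing $\Omega_1(R) \leq Z(L)$ pointwise. A refined analysis, invoking a theorem of Thompson for odd $p$ and Glauberman's $Z^*$-theorem for $p = 2$, shows that such a $p'$-automorphism must be trivial, so $N_L(R)/C_L(R)$ is a $p$-group. Hence $L$ has a normal $p$-complement for every prime $p$ and is nilpotent, yielding the contradiction.
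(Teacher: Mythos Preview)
Your reduction is precisely the paper's: a faithful non-regular transitive $P$-set is a coset space $P/H$ with $H$ non-trivial and core-free, and for $H$ of prime order this means exactly that $H$ is non-normal. The paper then stops and simply \emph{cites} Huppert for the statement that a finite group whose minimal subgroups are all normal is soluble; it does not supply a proof of its own.

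Your attempt to prove that cited theorem has a genuine gap at the prime~$2$. The Frobenius-criterion step needs every $p'$-automorphism of a $p$-subgroup $R$ that centralises $\Omega_1(R)$ to be trivial. For odd $p$ this is a classical theorem and your argument goes through. For $p=2$ the assertion is \emph{false}: the quaternion group $Q_8$ has $\Omega_1(Q_8)=Z(Q_8)=\{\pm 1\}$, yet admits the order-$3$ automorphism $i\mapsto j\mapsto k\mapsto i$. Glauberman's $Z^*$-theorem does not rescue this; applied to a central involution $t$ it only returns $t\in Z^*(L)$, which you already knew from $t\in Z(L)$. Hence you cannot conclude that $L$ has a normal $2$-complement, and the nilpotency of $L$ remains unproved. (A smaller loose end: the claim that a quasisimple $K$ contains a prime-order element outside $Z(K)$ does not follow merely from $K/Z(K)$ being non-abelian simple, since prime-order elements of the quotient need not lift to prime-order elements; this requires an inductive set-up or a separate argument --- though the whole component step would be redundant if the Frobenius route actually worked.) The paper sidesteps all of this by appealing to the literature, and the prime~$2$ is exactly where the classical proofs need substantially more than you have sketched.
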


This statement is a reformulation of Gaschütz and Itô's original theorem, see~\cite[Satz~5.7]{Hup67}, which says that groups with all minimal subgroups normal are soluble. For every non-trivial perfect group, the cardinality of $\tau(P)$ is at least~$5$.



\section{Proof of the Main theorem} 
\label{sec:proof_of_the_main_theorem}

Fix a countably based residually-(finite perfect) group $G$ with a faithful infinitary residually-(finite perfect) representation $\varphi \colon G \to \prod_{n \in \N_+} S_n$, fix a non-trivial finite perfect group $S_0$, and put $\mathfrak{S} = (S_n)_{n \in \N}$. Put $\mathfrak{X} = (\tau(S_{n-1}))_{n \in \N_+}$, using a fixed assignment~$\tau$ as in \cref{thm:gaschuetz-ito}. Fix a ray $\mathfrak{u}$ in $T = T_{\mathfrak{X}}$ and fix a spinal sequence~$\mathfrak{x}$ for~$\mathfrak{u}$ such that $\st_{S_n}(x_{n+1}) \neq \st_{S_n}(y_{n+1})$ for $u_ny_{n+1} = u_{n+1}$ and for all $n \in \N$. Such a sequence exists, since $S_n$ acts faithfully and non-regular on $\tau(S_{n})$. The group $\prod_{n \in \N_+} S_n$ naturally embeds into $\prod_{n \in \N_+} \Sym(\tau(S_n))$, which is isomorphic to $\operatorname{Sp}(\mathfrak{u}, \mathfrak{x})$. For convenience, the group $G$ is identified with its isomorphic image in $\operatorname{Sp}(\mathfrak{u}, \mathfrak{x})$, i.e.\ every $g \in G$ is identified with the $\mathfrak{u}$-spinal automorphism $s_g$ given by
\[
	s_g|_{u_n x_{n+1}} = g.\pi_{n+1} \quad\text{and}\quad s_g|_{u_ny} = \id
\]
for all $n \in \N$ and $y \in \tau(S_{n+1})$ such that $u_ny \notin \{u_nx_{n+1}, u_{n+1}\}$, where $\pi_n$ denotes the projection to the $n$\textsuperscript{th} component of the direct product $\prod_{n \in \N_+} S_n$, viewed as a rooted automorphism. Since $G$ is infinitary represented, the left-shift $\sigma$ induces an isomorphism of $G$ onto its image.

Consider the spinal group
\[
	\Gamma = \langle S_0 \cup G \rangle,
\]
where $S_0$ acts by rooted automorphisms. Clearly $\Gamma$ is finitely generated if $G$ is. In the remainder, all of the above is fixed. By construction, the group $\Gamma$ is contained in $\Aut_\mathfrak{S}(T)$. For every $m \in \N$, the groups $G.\sigma^m$ permit the identity map as infinitary residually-(finite perfect) representation within $\prod_{n \in \N_+} S_{n + m}$. Choose the shifts $\mathfrak{X}.\sigma^m, \mathfrak{u}.\sigma^m$ and $\mathfrak{x}.\sigma^m$ to embed $G.\sigma^m$ into $\Aut(T.\sigma^m)$, and put $\Gamma.\sigma^m = \langle S_m \cup G.\sigma^m \rangle$.

The construction of $\Gamma$ highly depends on the assignment $\tau$ and the representation~$\varphi$. On the other hand, the choice of $\mathfrak{u}$ and $\mathfrak{x}$ is less substantial and corresponds to conjugation within $\Aut(T)$.

\begin{proposition}\label{prop:Gamma branch}
	The group $\Gamma$ is a layered branch group.
\end{proposition}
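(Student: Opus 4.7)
The plan is to establish spherical transitivity and the layered property separately, after which the paper's remark that spherically transitive layered groups are automatically branch concludes the proof. The structural workhorse is the factorisation
\[
	s_g = \ins_{u_1}(s_{g.\sigma}) \cdot \ins_{x_1}(g.\pi_1),
\]
valid because $u_1 \neq x_1$ lie in the same layer, $s_g$ has trivial top action (from the identification of $\operatorname{Sp}(\mathfrak{u}, \mathfrak{x})$ with $\prod_n \Sym(X_n)$), and all other level-$1$ sections of $s_g$ vanish; the two insertions commute since they act on disjoint subtrees.

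For spherical transitivity I would induct on the layer. Layer~$1$ is handled by the inclusion $S_0 \subseteq \Gamma$ together with the transitivity part of \cref{thm:gaschuetz-ito} applied to the $S_0$-action on $X_1 = \tau(S_0)$. Deeper layers are then obtained in tandem with the layered property: once one knows $\rist_\Gamma(v) \cong \Gamma|_v = \Gamma.\sigma$ for $v \in X_1$, the same proposition applied inductively to the shifted pair $(\Gamma.\sigma, T.\sigma)$ gives transitivity on each subsequent layer of each subtree, and transitivity of $S_0$ at the top promotes this to transitivity on all of $\mathcal{L}_T(n)$.

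The layered property is the heart of the matter. I would first identify $\Gamma|_v = \Gamma.\sigma$ for every $v \in X_1$: the inclusion $\Gamma.\sigma \subseteq \Gamma|_v$ follows from tracking sections of $s_g$ and of $r s_g r^{-1}$ for $r \in S_0$ chosen to send $x_1 \mapsto v$ (which places $s_{g.\sigma}$ as a section at $v$), combined with the surjective projections $G \twoheadrightarrow S_n$ coming from the subdirect-product structure; the reverse inclusion is a standard product-of-sections argument using that $S_0$ contributes trivial sections at level $1$. To establish $\ins_v(\Gamma.\sigma) \subseteq \Gamma$, the spinal-sequence condition $\st_{S_0}(x_1) \neq \st_{S_0}(u_1)$ yields elements $r \in S_0$ fixing one of the two vertices $\{u_1, x_1\}$ and moving the other; the products $s_g \cdot (s_g^r)^{-1}$ and commutators $[s_g, r]$ then yield elements of $\Gamma$ of the \emph{paired-insertion} form $\ins_{v_1}(h) \cdot \ins_{v_2}(h^{-1})$, with $h$ either a rooted section $g.\pi_1 \in S_1$ or a spinal section $s_{g.\sigma}$. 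Taking the commutator of two such paired insertions that share a common vertex $v$ but are otherwise supported on disjoint vertices collapses to a genuine single-vertex insertion $\ins_v([h_1, h_2])$; coupled with the perfectness of each $S_n$ (so $S_n = [S_n, S_n]$ is generated by commutators) and the analogous collapse of commutators between rooted and spinal generators, this should suffice to produce all required single-vertex insertions.

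The main obstacle is precisely this final step of reducing paired insertions to single-vertex ones: the natural commutator and conjugation manipulations only produce elements whose level-$1$ supports span two vertices, and collapsing them requires a coordinated use of the spinal-sequence condition (to provide the requisite non-trivial stabilisers in $S_0$) together with the perfectness of the $S_n$ (to realise the desired sections as commutators in $\Gamma.\sigma$). Once the layered property is secured at layer~$1$, the construction iterates verbatim for the shifted pairs $(\Gamma.\sigma^n, T.\sigma^n)$, and spherical transitivity then falls out at every layer, yielding that $\Gamma$ is a layered branch group.
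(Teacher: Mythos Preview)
Your overall strategy matches the paper's: establish spherical transitivity and the layered property at level~$1$, then iterate for the shifts $\Gamma.\sigma^n$. The difference lies in how single-vertex insertions are obtained, and the paper's route is shorter precisely at the point you flag as ``the main obstacle''.

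Rather than first producing paired insertions of the form $s_g(s_g^r)^{-1}$ and then commuting two of those, the paper observes that each $g \in G$ is \emph{already} a double insertion, supported on $\{u_1, x_1\}$. Choosing $s \in \st_{S_0}(x_1)\smallsetminus\st_{S_0}(u_1)$, the conjugate $h^s$ is supported on $\{u_1.s, x_1\}$, and since $u_1.s \notin \{u_1, x_1\}$ the supports of $g$ and $h^s$ meet only at $x_1$. Hence
\[
	[g, h^s] = \ins_{x_1}\bigl([g|_{x_1}, h|_{x_1}]\bigr)
\]
is a single-vertex insertion in one step; perfectness of $S_1$ and subdirectness of $G$ then give $\ins_{x_1}(S_1) \subseteq \rist_\Gamma(x_1)$. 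With this in hand, your own factorisation $g = \ins_{x_1}(g|_{x_1})\ins_{u_1}(g|_{u_1})$ immediately yields $\ins_{u_1}(g|_{u_1}) \in \Gamma$, so no separate commutator collapse is needed for the spinal part. Conjugation by the transitive $S_0$ then places $\Gamma.\sigma = \langle S_1 \cup G.\sigma\rangle$ inside every $\rist_\Gamma(x)$.

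Your paired-insertion scheme can be pushed through, but arranging two paired insertions that overlap in exactly one vertex is not automatic from $\st_{S_0}(x_1) \neq \st_{S_0}(u_1)$ alone; you would additionally need to exploit transitivity of $S_0$ and $|X_1| \geq 5$ to manufacture the required disjointness. The paper's direct computation of $[g, h^s]$ bypasses this bookkeeping entirely.
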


\begin{proof}
	Note first that $G|_{u_1} = G.\sigma$ and $g = \ins_{x_1}(g|_{x_1})\ins_{u_1}(g|_{u_1})$ for all $g \in G$. Since~$S_0$ acts transitively on $X_1$ and the projection of $G$ onto $S_1$ is surjective, $\St_{\Gamma}(1)|_{y} \geq \langle S_1 \cup G.\sigma \rangle$ for all $y \in X_1$.
	At the same time, $\Gamma|_y$ is contained in $\langle \bigcup_{z \in X_1} S_0|_z \cup G|_z \rangle = \Gamma.\sigma$, hence $\St_{\Gamma}(1)|_{y} = \Gamma.\sigma$. Arguing for $\Gamma.\sigma$ just as for $\Gamma$ (and so on), one finds that $\St_\Gamma(n)|_{y}$ acts transitively on $X_{n+1}$ for all $n \in \N$ and $y \in \mathcal{L}_T(n)$, from which it directly follows that $\Gamma$ acts spherically transitive.

	Let $s \in \st_{S_0}(x_1)\smallsetminus \st_{S_0}(u_1)$, such an element exists by the choice of $\mathfrak{x}$ and since $S_0$ acts transitively on $\tau(S_0)$. Let $g, h \in G$. Then $[g, h^s]|_y = [g|_y, h^s|_y] = [g|_y, h|_{y.s^{-1}}]$. If $y \notin \{u_1, x_1\}$, the section $g|_y$ is trivial (and so is $[g, h^s]|_y$). But $u_1.s^{-1} \notin \{u_1, x_1\}$ either, whence $[g, h^s]|_{u_1} = \id$. Thus $[g, h^s] = \ins_{x_1}([g|_{x_1}, h|_{x_1}]) \in \rist_\Gamma(x_1)$. Since $G$ is a subdirect product, $G|_{x_1} = S_1$. Since $S_1$ is perfect, $\rist_\Gamma(x_1) \geq S_1$. By conjugation with $S_0$ (acting transitively), $\rist_\Gamma(x) \geq S_1$ for all $x \in X_1$. For every $g \in G$ one computes $\ins_{x_1}(g|_{x_1}^{-1})g = \ins_{u_1}(g|_{u_1}) \in \Gamma$. Thus $\rist_\Gamma(x) \geq \langle G|_{u_1} \cup S_1 \rangle = \Gamma.\sigma$. The result follows by arguing in the same way for the shifts $\Gamma.\sigma^n$.
\end{proof}

By the above, $\Fin_{\mathfrak{S}}(T) \leq \Gamma$. Define a map $\Delta$ from the set of subgroups of $G$ to the set of subgroups of $\Gamma$ as follows; for any subgroup $H \leq G$ set
\[
	\Delta(H) = \langle \Fin_\mathfrak{S}(T) \cup H \rangle.
\]
By definition, $\Delta(H)$ is a generalised spinal group for every subgroup $H \leq G$.

\begin{lemma}\label{lem:delta(M) layered}
	The group $\Delta(H)$ is a layered branch group for every $H \leq G$.
\end{lemma}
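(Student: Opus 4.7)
Plan:

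Since a spherically transitive layered group is automatically a branch group, the task reduces to proving that $\Delta(H)$ acts spherically transitively and that $\rist_{\Delta(H)}(v)|_v = \Delta(H)|_v$ at every vertex $v$. Spherical transitivity comes for free: the subgroup $\Fin_\mathfrak{S}(T) \leq \Delta(H)$ supplies the rooted action of $S_n$ at every level-$n$ vertex via insertion, and each $S_n$ acts transitively on $\tau(S_n) = X_{n+1}$, so layer-by-layer transitivity holds.

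The central step is a descent along the ray. I would prove by induction on $n \in \N$ that $\ins_{u_n}(h.\sigma^n) \in \Delta(H)$ for every $h \in H$. The base case $n=0$ is trivial. For the inductive step, the $\mathfrak{u}.\sigma^n$-spinal structure of $h.\sigma^n$ yields the decomposition
\[
	\ins_{u_n}(h.\sigma^n) = \ins_{u_n x_{n+1}}(h.\pi_{n+1}) \cdot \ins_{u_{n+1}}(h.\sigma^{n+1}),
\]
whose left-hand side is in $\Delta(H)$ by hypothesis and whose first factor is finitary---thus in $\Fin_\mathfrak{S}(T)$---forcing $\ins_{u_{n+1}}(h.\sigma^{n+1}) \in \Delta(H)$. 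Together with the trivial inclusion $\ins_{u_n}(\Fin_{\mathfrak{S}.\sigma^n}(T.\sigma^n)) \subseteq \Fin_\mathfrak{S}(T)$, this shows that $\rist_{\Delta(H)}(u_n)|_{u_n}$ contains all of $\langle H.\sigma^n \cup \Fin_{\mathfrak{S}.\sigma^n}(T.\sigma^n)\rangle$.

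To identify $\Delta(H)|_{u_n}$, I would unfold sections of a word in the generators using $(gh)|_v = g|_v \cdot h|_{v.g}$: a generator from $\Fin_\mathfrak{S}(T)$ contributes a finitary section at every level-$n$ vertex, while a $\mathfrak{u}$-spinal generator from $H$ contributes $h.\sigma^n$ at $u_n$ and a finitary section elsewhere. Hence every section at a level-$n$ vertex of an element of $\Delta(H)$ lies in $\langle H.\sigma^n \cup \Fin_{\mathfrak{S}.\sigma^n}(T.\sigma^n)\rangle$, matching the subgroup obtained above, so the layered condition at $u_n$ follows. For an off-ray vertex $v \in \mathcal{L}_T(n)$, I would transport this condition via spherical transitivity: pick $f \in \Fin_\mathfrak{S}(T)$ with $u_n.f = v$; then a direct computation with the section formula yields $f \cdot \ins_v(g) \cdot f^{-1} = \ins_{u_n}((f|_{u_n}) \cdot g \cdot (f|_{u_n})^{-1})$, and the conjugated section again lies in $\langle H.\sigma^n \cup \Fin_{\mathfrak{S}.\sigma^n}(T.\sigma^n)\rangle$ since $f|_{u_n}$ is finitary. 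The on-ray case places this insertion in $\Delta(H)$, and conjugating back by $f^{\pm 1} \in \Delta(H)$ yields $\ins_v(g) \in \Delta(H)$.

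I expect the most delicate point to be the inductive descent, where one must carefully align the insertion at $u_n$ with the internal spinal decomposition of $h.\sigma^n$; the subsequent section analyses and the conjugation transport are then routine applications of the section formula.
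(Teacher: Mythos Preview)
Your proposal is correct and follows essentially the same route as the paper: spherical transitivity from $\Fin_\mathfrak{S}(T)$, then showing that insertions of the generators' sections at $u_n$ lie in $\Delta(H)$, with the off-ray case handled by conjugation with finitary elements. The one difference is that your inductive descent is unnecessary: the paper has already recorded (in the preliminaries on generalised spinal elements) that $\ins_{u_n}(h|_{u_n}^{-1})\,h$ is finitary for any $\mathfrak{u}$-spinal $h$, so $\ins_{u_n}(h|_{u_n}) = h\cdot(\text{finitary element})^{-1} \in \Delta(H)$ in a single step rather than via an $n$-step peel.
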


\begin{proof}
	Since $\Fin_\mathfrak{S}(T)$ acts spherically transitive, so does $\Delta(H)$. Let $m \in M$ and let~$n \in \N$. Since $\Fin_\mathfrak{S}(T)$ is layered, it is sufficient to show $\ins_{u_n}(h|_v) \in \rist_{\Delta(H)}(u_n)$ for all $v \in \mathcal{L}_T(n)$. Since $h$ is $\mathfrak{u}$-spinal, $\ins_{u_n}(h|_{u_n}^{-1})h \in \Fin_{\mathfrak{S}}(T)$, and $\ins_{u_n}(h|_{u_n}) \in \Delta(H)$. The elements $\ins_{u_n}(h|_v)$ are finitary, whence also contained in $\Delta(H)$.
\end{proof}

\begin{lemma}\label{lem:delta(M) proper}
	The subgroup $\Delta(H) \leq \Gamma$ is proper for every proper subgroup $H < G$.
\end{lemma}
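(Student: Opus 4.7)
The plan is to prove the stronger statement $G \cap \Delta(H) = H$; since $G \subseteq \Gamma$, any element of $G \smallsetminus H$ then lies in $\Gamma \smallsetminus \Delta(H)$, establishing $\Delta(H) < \Gamma$. The inclusion $H \subseteq G \cap \Delta(H)$ is immediate, so the task reduces to proving $G \cap \Delta(H) \subseteq H$.

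Take $g \in G \cap \Delta(H)$. The key point is that $\Delta(H) = \langle \Fin_\mathfrak{S}(T) \cup H \rangle$ is a generalised spinal group generated by the finitary set $\Fin_\mathfrak{S}(T)$ and the $\mathfrak{u}$-spinal set $H$, so \cref{lem:contraction} identifies its nuclear sequence as $\overline{(H|_{(n)})_{n \in \N}}^{\mathrm{cf}}$. Hence there exists $n_0 \in \N$ such that $g|_v \in H|_{(|v|)}$ for every vertex $v$ with $|v| \geq n_0$. I would specialise to $v = u_n$ with $n \geq n_0$: by the $\mathfrak{u}$-spinal structure of the identification $g \mapsto s_g$, the section $g|_{u_n}$ equals $s_{g.\sigma^n}$, a $\mathfrak{u}.\sigma^n$-spinal element of $\Aut(T.\sigma^n)$.

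A direct inspection of the recipe defining $s_h$ for $h \in H$ shows that its non-trivial depth-$n$ sections occur only at the spinal vertex $u_n$, where $s_h|_{u_n} = s_{h.\sigma^n}$, and at the off-spine-adjacent vertex $u_{n-1}x_n$, where $s_h|_{u_{n-1}x_n} = h.\pi_n$ (all other depth-$n$ sections being trivial). Consequently,
\[
	H|_{(n)} = \{s_{h.\sigma^n} \mid h \in H\} \cup \{h.\pi_n \mid h \in H\} \cup \{\id\}.
\]
A short case analysis comparing $s_{g.\sigma^n}$ with the three types of elements of $H|_{(n)}$ finishes the argument: matching $\id$ or a rooted label $h.\pi_n$ forces the deeper spinal section $s_{g.\sigma^{n+1}}$ to vanish (the section of a rooted element at any vertex of positive depth is trivial), whence $g.\sigma^{n+1} = 1$ and hence $g = 1 \in H$ by the infinitary nature of $\varphi$; matching $s_{h.\sigma^n}$ yields $g.\sigma^n = h.\sigma^n$ by injectivity of $g' \mapsto s_{g'}$, and then $g = h \in H$ again by infinitariness. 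The only delicate point is identifying the spinal vertex $u_n$ as the right place to evaluate; once this is done, everything reduces to bookkeeping plus repeated appeals to the triviality of the kernel of $\sigma^n$ on $G$.
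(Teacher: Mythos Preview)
Your argument is correct and actually establishes the sharper statement $G \cap \Delta(H) = H$, which the paper does not prove. Both proofs invoke \cref{lem:contraction}, but they deploy it differently: the paper applies it to \emph{both} $\Delta(H)$ and $\Gamma$, obtaining nuclear sequences $\overline{(H.\sigma^n)}^{\mathrm{cf}}$ and $\overline{(G.\sigma^n)}^{\mathrm{cf}}$, and observes that the injectivity of $\sigma^n$ on $G$ keeps these cofinality classes distinct, so the groups cannot coincide. You instead apply contraction only to $\Delta(H)$, then evaluate the section of a putative $g \in G \cap \Delta(H)$ at the single spinal vertex $u_n$ and run a short case analysis against the explicit description of $H|_{(n)}$. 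Your route is more hands-on and gives a stronger conclusion; the paper's route is shorter and avoids the case split by packaging the comparison into the uniqueness of the nuclear sequence. Either way, the essential input is the same: the injectivity of the shifts $\sigma^n$ on $G$ coming from the infinitary representation.
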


\begin{proof}
	Both $\Delta(H)$ and $\Gamma$ are $\mathfrak{u}$-generalised spinal groups. Thus, by \cref{lem:contraction}, they are contracting with nuclear sequence $\overline{(H.\sigma^n)_{n \in \N}}^\mathrm{cf}$ and $\overline{(G.\sigma^n)_{n \in \N}}^\mathrm{cf}$, respectively. Since the shifts $\sigma^n \colon G \to G.\sigma^n$ are bijections, the inclusions~$H.\sigma^n < G.\sigma^n$ are proper for all $n \in \N$, whence the nuclear sequences of $\Delta(H)$ and $G$ are not cofinal. Therefore $\Delta(H)$ is a proper subgroup.
\end{proof}

\begin{lemma}\label{lem:delta injective}
	The map $\Delta$ is injective on the set of maximal subgroups of $G$.
\end{lemma}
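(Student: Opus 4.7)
The plan is to argue by contradiction and show that any two distinct maximal subgroups with the same image under $\Delta$ would force this common image to equal all of $\Gamma$, contradicting \cref{lem:delta(M) proper}. The crucial preliminary observation is that $\Gamma$ itself may be written as $\Delta(G)$: since elements of $S_0$ are rooted, we have $S_0 \subseteq \Fin_{\mathfrak{S}}(T)$, and the text establishes $\Fin_{\mathfrak{S}}(T) \leq \Gamma$ (stated immediately before the definition of $\Delta$), so
\[
	\Gamma = \langle S_0 \cup G \rangle \subseteq \langle \Fin_{\mathfrak{S}}(T) \cup G \rangle = \Delta(G) \subseteq \Gamma.
\]

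Suppose now that $M_1, M_2$ are two distinct maximal subgroups of $G$. Since neither is strictly contained in the other, maximality forces $\langle M_1 \cup M_2 \rangle = G$. Assuming toward a contradiction that $\Delta(M_1) = \Delta(M_2)$, I would compute
\[
	\Delta(M_1) = \langle \Delta(M_1) \cup \Delta(M_2) \rangle = \langle \Fin_{\mathfrak{S}}(T) \cup M_1 \cup M_2 \rangle = \langle \Fin_{\mathfrak{S}}(T) \cup G \rangle = \Gamma,
\]
which contradicts \cref{lem:delta(M) proper} applied to the proper subgroup $M_1 < G$. Hence $M_1 = M_2$ and $\Delta$ is injective on maximal subgroups.

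There is essentially no obstacle in this argument: it merely packages the properness assertion of \cref{lem:delta(M) proper} together with the obvious monotonicity of $\Delta$ and the definition of maximality. All the substantive work lies upstream, in \cref{lem:contraction} and its consequence \cref{lem:delta(M) proper}, which compare nuclear sequences in order to detect that a strict inclusion $H < G$ is preserved under $\Delta$.
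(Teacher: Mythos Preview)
Your proof is correct and follows essentially the same route as the paper: both argue that if $\Delta(M_1)=\Delta(M_2)$ for distinct maximal $M_1,M_2\leq G$, then this common subgroup contains $\langle M_1\cup M_2\rangle=G$ together with $\Fin_{\mathfrak{S}}(T)\supseteq S_0$, hence equals $\Gamma$, contradicting \cref{lem:delta(M) proper}. The only cosmetic difference is that you first record $\Gamma=\Delta(G)$ explicitly, whereas the paper invokes $S_0\leq\Fin_{\mathfrak{S}}(T)$ directly at the end.
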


\begin{proof}
	Let $M$ and $\tilde{M}$ be two maximal subgroups of $G$ and assume that $\Delta(M) = \Delta(\tilde{M})$. Then $M \leq \Delta(\tilde{M})$, hence $G = \langle M \cup \tilde{M} \rangle \leq \Delta(\tilde{M})$. Clearly $S_0 \leq \Fin_{\mathfrak{S}}(T)$, thus $\Gamma = \langle S_0 \cup G \rangle = \Delta(\tilde{M})$, which contradicts \cref{lem:delta(M) proper}.
\end{proof}

\begin{proposition}\label{prop:delta(M) maximal}
	The subgroup $\Delta(M) \leq \Gamma$ is maximal for every $M \in \Max(G)$.
\end{proposition}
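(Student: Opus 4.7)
The plan is to take any $g \in \Gamma \smallsetminus \Delta(M)$ and produce an element of $G \smallsetminus M$ in $\langle \Delta(M), g \rangle$; by the maximality of $M$ in $G$ this forces $\langle \Delta(M), g \rangle \supseteq \langle \Fin_\mathfrak{S}(T), G \rangle = \Gamma$, which is the claim.

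A cornerstone used throughout is the identity $G \cap \Delta(M) = M$. For $g \in G \cap \Delta(M)$, \cref{lem:contraction} places the spinal section $g|_{u_n} = g.\sigma^n$ in the nuclear set of $\Delta(M)$, which lies in $M.\sigma^n \cup S_n \cup \{\id\}$; the infinitary character of $\varphi$ prevents any non-trivial element of $G.\sigma^n$ from being finitary, so intersecting with $G.\sigma^n$ leaves just $M.\sigma^n$, and injectivity of $\sigma$ gives $g \in M$.

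For the extraction I would first replace $g$ by a representative of its $\Delta(M)$-coset lying in $\St_\Gamma(N)$ for large $N$, using that $\Fin_\mathfrak{S}(T) \leq \Delta(M)$ realises the full iterated wreath-product action on $\mathcal{L}_T(N)$ and therefore surjects onto the finite quotient $\Gamma / \St_\Gamma(N)$. Layeredness decomposes $\St_\Gamma(N) \cong \prod_{v \in \mathcal{L}_T(N)} \Gamma.\sigma^N$ and $\St_{\Delta(M)}(N) \cong \prod_v \Delta(M.\sigma^N)$; combining contraction of $\Gamma$ with the inclusion $M.\sigma^N \cup S_N \cup \{\id\} \subseteq \Delta(M.\sigma^N)$ forces at least one section $g|_v$ to lie in $G.\sigma^N \smallsetminus M.\sigma^N$. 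A finitary element in $\Delta(M)$ of depth at most $N$ sending $v$ to $u_N$---available because the iterated action of the $S_i$ on $\mathcal{L}_T(N)$ is transitive---conjugates the defect onto the spine, yielding $g|_{u_N} \in G.\sigma^N \smallsetminus M.\sigma^N$. Once the remaining off-$u_N$ sections have been cleaned up into $\Delta(M.\sigma^N)$, the decomposition $g = \ins_{u_N}(g|_{u_N}) \cdot \prod_{w \neq u_N} \ins_w(g|_w)$ places $\ins_{u_N}(g|_{u_N})$ in $\langle \Delta(M), g \rangle$. Setting $h = g|_{u_N}.\sigma^{-N} \in G$, the paper's spinal identity $h = \ins_{u_N}(h|_{u_N}) \cdot (\text{finitary})$ then delivers $h \in \langle \Delta(M), g \rangle$, and the preliminary identity $G \cap \Delta(M) = M$ forces $h \in G \smallsetminus M$, as required.

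The main obstacle is the clean-up step: the initial conjugation moves only a single bad section onto the spine, leaving potentially many off-$u_N$ vertices at which $g|_w \in G.\sigma^N \smallsetminus M.\sigma^N$. This is overcome either by iterative descent---the self-similar statement $\Delta(M.\sigma^N) \leq \Gamma.\sigma^N$ has the same form and admits a recursive argument inside each offending subtree $T_w$---or by a simultaneous application of the wreath-permutation transitivities to align all defects at once; both crucially rely on the abundance of finitary elements in $\Delta(M)$ together with the layered structure from \cref{lem:delta(M) layered}.
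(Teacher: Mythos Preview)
Your overall shape is right (contract, find a section outside $M.\sigma^N$, move it onto the spine, then recover an element of $G\smallsetminus M$ inside $\langle \Delta(M),g\rangle$), and your auxiliary observation $G\cap\Delta(M)=M$ is correct and useful. However, the ``clean-up step'' is a genuine gap, and neither of your two suggested fixes closes it. After conjugating so that $g|_{u_N}\in (G\smallsetminus M).\sigma^N$, there may still be several other vertices $w\in\mathcal L_T(N)$ with $g|_w\in (G\smallsetminus M).\sigma^N$. Finitary conjugation only permutes the level-$N$ sections (up to finitary conjugation of each), so it cannot reduce the number of bad sections; and ``iterative descent'' has no well-founded induction: passing to deeper levels just pushes each bad section one step further along a parallel copy of the spine, leaving exactly as many bad sections as before. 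Since $\ins_w(g|_w)\notin\Delta(M)$ for these $w$, you cannot peel them off to isolate $\ins_{u_N}(g|_{u_N})$ inside $\langle\Delta(M),g\rangle$. A telling symptom is that your argument never uses the hypothesis $M\in\Max(G)$, i.e.\ that $M$ is \emph{non-normal}; the paper's proof uses this essentially.

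The paper sidesteps the clean-up problem altogether. Once one has arranged $g\in\st(u_k)$ with $g|_{u_k}=g'.\sigma^k$ for some $g'\in G\smallsetminus M$, one does \emph{not} try to extract $g|_{u_k}$ itself. Instead, for $m\in M$ the element $\ins_{u_k}(m.\sigma^k)$ lies in $\Delta(M)$ by layeredness, and because $g$ fixes $u_k$ its conjugate
\[
\ins_{u_k}(m.\sigma^k)^{\,g}\;=\;\ins_{u_k}\bigl((m.\sigma^k)^{g|_{u_k}}\bigr)\;=\;\ins_{u_k}\bigl((m^{g'}).\sigma^k\bigr)
\]
automatically lies in $\rist(u_k)$, \emph{regardless} of the other sections of $g$. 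The spinal identity then gives $m^{g'}\in\langle\Delta(M),g\rangle$. Non-normality enters here: since $\Norm_G(M)=M$ and $g'\notin M$, one may choose $m\in M$ with $m^{g'}\notin M$, producing the required element of $G\smallsetminus M$ without ever having to control the off-spine sections of $g$.
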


\begin{proof}	
	Let $g \in \Gamma \smallsetminus \Delta(M)$, put $\Lambda = \langle \Delta(M) \cup \{g\}\rangle$, and let $k \in \N$ be such that $g|_v \in G|_{(k)}$ for all $v \in \mathcal{L}_T(k)$; this number exists by \cref{lem:contraction}. Since $g$ is not finitary, there is a section~$g|_v$ of $g$ contained in $G.\sigma^k$. Assume that all non-finitary sections~$g|_v$ are contained in~$M.\sigma^k$. By \cref{lem:delta(M) layered}, ~$\Delta(M)$ is layered, hence the element $h := \prod_{v \in \mathcal{L}_T(k)} \ins_v(g|_v)$ is contained in $\St_{\Delta(M)}(k)$, whence $gh^{-1}$ is finitary and $g \in \Delta(M)$, a contradiction. Thus there is a section $g|_v \in (G \smallsetminus M).\sigma^k$. Since $\Fin_\mathfrak{S}(T)$ acts spherically transitive, without loss of generality $v = u_k$, replacing $g$ with a $\Fin_\mathfrak{S}(T)$-conjugate if necessary. Similarly, it may be assumed that $g \in \st(u_k)$.
	Since $M \not\trianglelefteq G$, we find $\Norm_{G}(M) = M$; so let $m \in M$ such that $m^{g} \notin M$. Since $\Delta(M)$ is layered, $\ins_{u_k}(m|_{u_k})^g = \ins_{u_k}(m^g|_{u_k}) \in \Lambda$. But $\ins_{u_k}(m^g|_{u_k}^{-1})m \in \Fin_\mathfrak{S}(T)$, hence $m^g \in \Lambda$. The maximality of $M$ now ensures $G = \langle \{m^g\} \cup M \rangle \leq \Lambda$, whence $\Gamma = \langle S_0 \cup G \rangle \leq \Lambda \leq \Gamma$. Therefore, $\Delta(M)$ is maximal.
\end{proof}

The \hyperref[thm:main]{Main Theorem} is a consequence of \cref{prop:Gamma branch}, \cref{lem:delta injective} and \cref{prop:delta(M) maximal}. The following corollary, which provides a positive answer to the \hyperref[que:main]{Main Question}, is obtained by applying it to a finitely generated countably based residually-(finite perfect) group with uncountably many maximal subgroups (note that a finitely generated group permits at most countably many normal maximal subgroups); for example to a non-cyclic free group of finite rank, which are residually alternating by a result of Magnus and Katz in~\cite{KM69}.

\begin{corollary}\label{cor:main}
	There exists a finitely generated branch group with uncountably many maximal subgroups.
\end{corollary}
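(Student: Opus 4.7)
The strategy is a direct application of the Main Theorem. That theorem provides, for every countably based residually-(finite perfect) group $G$, a layered branch group $\Gamma$ (finitely generated when $G$ is) together with an injection from $\Max(G)$ into the set of maximal subgroups of $\Gamma$. Thus it suffices to exhibit a single finitely generated, countably based residually-(finite perfect) group $G$ whose set $\Max(G)$ is uncountable; the branch group $\Gamma$ produced by the theorem will then inherit uncountably many maximal subgroups.

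The natural candidate is the free group $F$ of rank $2$. It is finitely generated, hence countable, so any residual representation of $F$ is automatically countably based. By the Magnus--Katz theorem \cite{KM69}, $F$ is residually alternating: for every non-trivial $w \in F$ there is a homomorphism of $F$ onto some $A_n$ with $n \ge 5$ that does not kill $w$. Since each such $A_n$ is finite and perfect, this supplies a faithful residually-(finite perfect) representation of $F$, and the hypotheses of the Main Theorem are met.

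The substantive step---and the main obstacle---is to verify that $F$ admits uncountably many non-normal maximal subgroups. One route is to exhibit $2^{\aleph_0}$ pairwise distinct maximal subgroups of $F$ as point stabilisers of primitive, faithful, non-regular actions of $F$ on a countable set; primitivity guarantees maximality, and non-regularity ensures the stabilisers are non-normal. Alternatively, one may combine the classical fact that $F$ has $2^{\aleph_0}$ maximal subgroups with the assertion, noted in the statement, that at most countably many of them are normal. Either way, once $|\Max(F)|$ is known to be uncountable, the Main Theorem applied to $G = F$ yields the finitely generated branch group $\Gamma$ required by the corollary.
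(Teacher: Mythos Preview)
Your proposal is correct and follows essentially the same route as the paper: apply the Main Theorem to a non-cyclic free group of finite rank, invoke the Magnus--Katz result \cite{KM69} to obtain the residually-(finite perfect) hypothesis, and use that a finitely generated group has at most countably many normal maximal subgroups to pass from ``uncountably many maximal subgroups'' to ``uncountably many non-normal maximal subgroups''. The paper records exactly this argument in the sentence preceding the corollary, with the same example and the same two ingredients.
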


\begin{remark}
	A statement analogous to the \hyperref[thm:main]{Main Theorem} can be established for a countably afforded residually finite group $H$, say $H \leq \prod_{n\in\N} T_n$. Let~$\Gamma$ be the group constructed as above, for a countably based residually-(perfect alternating) group (e.g.\ the free group of rank two). Every group $T_n$ embeds into some sufficiently large alternating group. Pick a spinal sequence with no term equal to the spinal sequence employed for the construction of~$\Gamma$. Using similar methods as for the \hyperref[thm:main]{Main Theorem}, it can be proved that the group $\langle \Gamma \cup H \rangle$ is layered branch and contains maximal subgroups $\langle \Gamma \cup M \rangle$ for all $M \in \Max(H)$, since the spinal elements in $G$ and $H$ can be distinguished by their spinal sequences.	
\end{remark}

\begin{remark}
	The construction in the previous remark furthermore establishes that every countably based residually finite group embeds into a layered branch group, which may serve as a minor adjunct to the main result of \cite{KS24}.
\end{remark}


\section{Questions and directions} 
\label{sec:questions_and_directions}

This section contains a number of problems pertaining to the \hyperref[que:main]{Main Question}. A rooted tree is called \emph{$m$-regular} if all vertices, excluding the root, possess valency $m+1$, while the root’s valency is $m$. Although branch groups acting on trees of increasing valency have recently received increased attention, as evidenced by~\cite{Pet24, ST24}, branch groups acting on $m$-regular rooted trees hold a particular significance.

\begin{question}
	Does there exist a finitely generated branch group acting on a regular rooted tree that admits uncountably many maximal subgroups?
\end{question}

The construction presented in the preceding section cannot provide a positive answer. If a finitely generated group~$G$ is afforded by a sequence of groups of bounded order, it is necessarily finite by the resolution of the restricted Burnside problem.

\begin{question}
	Is the map $\Delta$ a bijection between $\Max(G)$ and the set of maximal subgroups of infinite index in $\Gamma$?
\end{question}

In \cite{Fra20}, Francoeur demonstrated that every maximal subgroup of infinite index in a branch group is itself a branch group. Considering the groups $\Gamma$ and their maximal subgroups $\Delta(M)$, the following question is natural:

\begin{question}
	Let $G$ be a finitely generated branch group. Is every infinite index branch subgroup of $G$ contained in a (branch) infinite index maximal subgroup of $G$?
\end{question}



\begin{thebibliography}{10}

\bibitem{BGS03}
L. Bartholdi, R.~I. Grigorchuk, and Z. {\v S}uni{\'k},
  \emph{Branch groups}, Handbook of {{Algebra}}, vol.~3, Elsevier, 2003,
  pp.~989--1112.

\bibitem{Bon10}
Ie.~V. Bondarenko, \emph{Finite generation of iterated wreath products},
  Archiv der Mathematik \textbf{95} (2010), no.~4, 301--308.

\bibitem{Fra20}
D. Francoeur, \emph{On maximal subgroups of infinite index in branch and
  weakly branch groups}, Journal of Algebra \textbf{560} (2020), 818--851.

\bibitem{FG18}
D. Francoeur and A. Garrido, \emph{Maximal subgroups of groups of
  intermediate growth}, Advances in Mathematics \textbf{340} (2018),
  1067--1107.

\bibitem{FT22}
D. Francoeur and A. Thillaisundaram, \emph{Maximal subgroups of
  nontorsion {{Grigorchuk}}--{{Gupta}}--{{Sidki}} groups}, Canadian
  Mathematical Bulletin \textbf{65} (2022), no.~4, 825--844.

\bibitem{GP24}
M.~E. Garciarena and J.~M. Petschick, \emph{Maximal subgroups in
  torsion branch groups}, arXiv:2410.06783 [math.GR] (2024).

\bibitem{GV24}
R. Grigorchuk and Y. Vorobets, \emph{On maximal subgroups of ample
  groups}, arXiv:2403.16364 [math.GR] (2024).

\bibitem{Hup67}
B.~Huppert, \emph{Endliche {{Gruppen}}. {{I}}}, Die {{Grundlehren}} der
  Mathematischen {{Wissenschaften}}, {{Band}} 134, Springer-Verlag, Berlin-New
  York, 1967.

\bibitem{KM69}
R.~A. Katz and W. Magnus, \emph{Residual properties of free groups},
  Communications on Pure and Applied Mathematics \textbf{22} (1969), no.~1,
  1--13.

\bibitem{KS24}
S. Kionke and E. Schesler, \emph{Realising residually finite groups as
  subgroups of branch groups}, Bulletin of the London Mathematical Society
  \textbf{56} (2024), no.~2, 536--550.

\bibitem{KT18}
B. Klopsch and A. Thillaisundaram, \emph{Maximal {{Subgroups}} and
  {{Irreducible Representations}} of {{Generalized Multi-Edge Spinal Groups}}},
  Proceedings of the Edinburgh Mathematical Society \textbf{61} (2018), no.~3,
  673--703.

\bibitem{Per05}
E.~L. Pervova, \emph{Maximal subgroups of some non locally finite p-groups},
  International Journal of Algebra and Computation \textbf{15} (2005),
  no.~05n06, 1129--1150.

\bibitem{Pet24}
J.~M. Petschick, \emph{On finitely generated {{Engel}} branch groups},
  Journal of the London Mathematical Society \textbf{110} (2024), no.~3.

\bibitem{Pet23}
J.~M. Petschick, \emph{Two periodicity conditions for spinal groups},
  Journal of Algebra \textbf{633} (2023), 242--269.

\bibitem{ST24}
R. Skipper and A. Thillaisundaram, \emph{GGS-groups acting on trees of growing degrees}, Communications in Algebra (2025), 1–28.

\bibitem{Thi25}
A. Thillaisundaram, \emph{Burnside groups and groups acting on rooted
  trees}, Advances in Group Theory and Applications \textbf{20} (2025),
  121--161.

\end{thebibliography}

\providecommand{\bysame}{\leavevmode\hbox to3em{\hrulefill}\thinspace}
\providecommand{\MR}{\relax\ifhmode\unskip\space\fi MR }
\providecommand{\MRhref}[2]{%
  \href{http://www.ams.org/mathscinet-getitem?mr=#1}{#2}
}
\providecommand{\href}[2]{#2}

\end{document}